\newtheorem{theorem}{Theorem}
\newtheorem{lemma}{Lemma}
\newtheorem{corollary}{Corollary}
\newcommand{\Z}{\ensuremath{\mathbf Z}}
\newcommand{\mcg}{\ensuremath{ \mathcal G}}
\newcommand{\beq}{\begin{equation}}
\newcommand{\eeq}{\end{equation}}
\newcommand{\benum}{\begin{enumerate}}
\newcommand{\eenum}{\end{enumerate}}
\title[Integers with missing digits]{Curious convergent series of integers with missing digits}
\author{Melvyn B. Nathanson}
\address{Lehman College (CUNY), Bronx, NY 10468}
\email{melvyn.nathanson@lehman.cuny.edu}
\subjclass[2010]{11A63, 11B05, 11B75, 11K16.}
\keywords{Integers with missing digits, harmonic series}
\dedicatory{To Ron Graham}
\date{\today}
\begin{document}

\begin{abstract}
A classical theorem of Kempner states that the sum of the reciprocals of positive integers 
with missing decimal digits converges.  
This result is extended to much larger families of ``missing digits'' sets of positive integers  
with both convergent and divergent harmonic series.  
\end{abstract}

\maketitle

\section{Kempner's theorem}

\begin{quotation}
``It is well known that the series 
\[
\sum_{n=1}^{\infty} \frac{1}{n} = \frac{1}{ 1 } +  \frac{1}{ 2 } +  \frac{1}{ 3 } + \cdots 
\]
diverges.  The object of this Note is to prove that if the denominators do not include all natural 
numbers 1, 2, 3, \ldots, but only those which do not contain any figure 9, the series converges.  
The method of proof holds unchanged if, instead of 9, any other figure 1, 2, \ldots, 8 is excluded, 
but not for the figure 0.''
\begin{flushright}
A. J. Kempner, \emph{Amer. Math. Monthly} 21 (1914), 48--50
\end{flushright}
\end{quotation}

A \emph{harmonic series} is a series of the form $\sum_{a\in A} 1/a$, where $A$ is a set of positive integers.  
Mathematicians have long been interested in the 
convergence or divergence of harmonic series. 
Let $c \in \{1,2,\ldots, 9\}$, and let $A_{10}(c)$ be the set of positive integers 
in which the digit $c$ does not occur in the usual decimal representation.  
Kempner~\cite{kemp14}  proved in 1914 that $\sum_{a\in A_{10}(c)} 1/a$ converges.  
He called this  ``a curious convergent series.''  
More generally, for every integer $g \geq 2$,  every positive integer $n$ 
has a unique  \textit{$g$-adic representation} of the form $n = \sum_{i=0}^k c_i g^i$, 
with digits $c_i \in \{0,1,2,\ldots, g-1\}$ for $i = 0,1,\ldots, k$ and $c_k \neq 0$. 
If $A_g(c)$ is the set of integers whose $g$-adic representation contains no digit $c$, then 
the infinite series $\sum_{a\in A_g(c)} 1/a$ converges.   
This includes the case $c=0$, which was not discussed by Kempner.

Kempner's theorem has been studied and extended by 
Baillie~\cite{bail79}, Farhi~\cite{fahr08}, Gordon~\cite{gord19}, Irwin~\cite{irwi16}, 
Lubeck-Ponomarenko~\cite{lube-pono18}, 
Schmelzer and Baillie~\cite{schm-bail08}, and Wadhwa~\cite{wadh75,wadh78}.  
It is Theorem 144 in Hardy and Wright~\cite{hard-wrig08}.

The $g$-adic representation is a special case of a more general method to represent the positive integers.  
A \emph{\mcg-adic sequence} is a strictly increasing sequence of positive integers $\mcg = (g_i)_{i=0}^{\infty}$ 
such that  $g_ 0 = 1$ and $g_{i}$ divides $g_{i+1}$ for all $ i \geq 0$.   
The integer quotients
\[
d_{i} = \frac{g_{i+1}}{g_{i}}
\]
satisfy $d_i \geq 2$ and 
\beq               \label{MissingDigits:gk-product}
g_{k+1} = g_{k}d_{k} = d_0 d_1 d_2 \cdots d_{k}
\eeq
for all $k \geq 0$.
Every positive integer $n$ has a unique representation in the form 
\beq               \label{MissingDigits:n}
n = \sum_{i=0}^{k} c_i g_i
\eeq
where $c_i \in \{0,1,\ldots, d_i-1\}$ for all $i  \in \{ 0,1,\ldots, k \}$ and $c_{k} \neq 0$. 
We call~\eqref{MissingDigits:n} the \textit{$\mcg$-adic representation} of $n$. 
This is equivalent to de Bruijn's additive system (Nathanson~\cite{nath2014-150, nath2017-172}).

Harmonic series constructed from sets of positive integers 
with missing $\mcg$-adic digits do not necessarily  converge. 
In Theorem~\ref{MissingDigits:theorem:converges} we construct sets of integers 
with missing $\mcg$-adic digits whose harmonic series converge, 
and also sets of integers 
with missing $\mcg$-adic digits whose harmonic series diverge.

\section{\mcg-adic representations with bounded quotients}

Define the \textit{interval of integers} 
\[
[a,b] = \{n \in \Z: a \leq n \leq b\}.
\]

Let $\mcg = (g_i)_{i=0}^{\infty}$ be a \mcg-adic sequence with quotients 
$d_i = g_{i+1}/g_i$. 
Let $I$ be a set of nonnegative integers, and, for all $i \in I$, 
let $U_i$ be a nonempty proper subset of  $[0,d_i-1]$.  
For every nonnegative integer $k$, let $A_{k}$ be the set of integers $n \in [g_k,g_{k+1} - 1]$ 
 whose  $\mcg$-adic representation 
 $n = \sum_{i=0}^k c_i g_i $
satisfies the following \textit{missing digits condition}: 
\beq                    \label{Digits:MissingDigitsCondition}  
c_i \in [0,d_i -1]  \setminus U_i \qquad \text{for all $i \in I \cap [0,k]$.}
\eeq

\begin{lemma}                      \label{MissingDigits:lemma:Ak} 
The set $A_k$ satisfies:
\benum
\item[(a)]
$A_k = \emptyset$ if and only if $k \in I$ and $U_k = [1,d_k-1]$.  
\item[(b)]
If $A_k \neq \emptyset$, then 
\beq               \label{MissingDigits:|Ak|}    
|A_k| \leq \prod_{\substack{i=0 \\ i \in I}}^{k} (d_{i}-|U_i|) 
 \prod_{\substack{i=0 \\ i \notin I}}^{k} d_{i} \leq 2|A_k|.
\eeq
\eenum
\end{lemma}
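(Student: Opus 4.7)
The plan is to reduce both parts to a clean counting problem on digit tuples, exploiting the uniqueness of the $\mcg$-adic representation. For each $i \in [0,k]$, let $S_i \subseteq [0,d_i-1]$ denote the set of admissible values of $c_i$: $S_i = [0,d_i-1] \setminus U_i$ if $i \in I$, and $S_i = [0,d_i-1]$ otherwise. Since $n \in [g_k, g_{k+1}-1]$ if and only if the leading digit $c_k$ is nonzero, the map $n \mapsto (c_0,\ldots,c_k)$ identifies $A_k$ with the set of tuples in $S_0 \times \cdots \times S_k$ satisfying $c_k \neq 0$. Writing $N_k := \prod_{i=0}^k |S_i|$, I note that $N_k$ is exactly the product in the middle of \eqref{MissingDigits:|Ak|}, and that $|S_i| \geq 1$ for every $i$ because $U_i$ is a proper subset of $[0,d_i-1]$ (and $|S_i| = d_i \geq 2$ when $i \notin I$).

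For part (a), since every $S_i$ is nonempty, $A_k$ is empty exactly when the restriction on the leading digit, $c_k \in S_k \setminus \{0\}$, admits no solution. For $k \notin I$ this set equals $[1,d_k-1]$, which is nonempty. For $k \in I$, it equals $[1,d_k-1] \setminus U_k$, which is empty precisely when $[1,d_k-1] \subseteq U_k$; combined with $U_k \subsetneq [0,d_k-1]$, this forces $U_k = [1,d_k-1]$, giving the claimed equivalence.

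For part (b), the upper bound $|A_k| \leq N_k$ is immediate, since $A_k$ is obtained from a set of size $N_k$ by forbidding $c_k = 0$. For the lower bound $N_k \leq 2|A_k|$, the fraction of tuples in $S_0 \times \cdots \times S_k$ eliminated by the constraint $c_k \neq 0$ is $1/|S_k|$ when $0 \in S_k$, and $0$ otherwise, so it suffices to verify $|S_k| \geq 2$ whenever $0 \in S_k$. This is automatic when $k \notin I$. The only delicate case is $k \in I$ with $0 \in S_k$, i.e.\ $0 \notin U_k$: here the hypothesis $A_k \neq \emptyset$ together with part (a) rules out $U_k = [1,d_k-1]$, so $S_k \cap [1,d_k-1]$ is nonempty and $|S_k| \geq 2$. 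The main (minor) obstacle is precisely this case distinction on the leading digit; once it is dispatched using part (a), the rest is a product-of-sets count.
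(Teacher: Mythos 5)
Your proof is correct and follows essentially the same route as the paper: identify $A_k$ with the product set of admissible digits subject to $c_k \neq 0$, and observe that deleting the leading-zero tuples costs at most a factor of $2$ because the admissible leading-digit set has size at least $2$ whenever it contains $0$ and $A_k \neq \emptyset$. The paper organizes this as three explicit cases using the inequality $x \leq 2(x-1)$ for $x \geq 2$, which is the same estimate as your $|S_k|/(|S_k|-1) \leq 2$.
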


\begin{proof}  
We use the inequality 
\beq                                             \label{MissingDigits:inequality-2}  
x \leq 2(x-1) \qquad \text{for $x \geq 2$.}
\eeq

If $n \in [g_k,g_{k+1} - 1]$, then $n$ has the \mcg-adic representation
\[
n = \sum_{i=0}^{k-1} c_i g_i + c_kg_k 
\]
with $c_k \neq 0$ and so $c_k \in [1,d_k-1]$.
It follows that $A_k = \emptyset$  
if and only if $k \in I$ and $U_k = [1,d_k-1]$.

For $A_k \neq \emptyset$, there are three cases.
\begin{enumerate}
\item[(i)]
If  $k \in I$ and $0 \in U_k$, then  
\[
|A_k| = \prod_{\substack{i=0 \\ i \in I}}^{k} (d_{i}-|U_i|)  \prod_{\substack{i=0 \\ i \notin I}}^{k} d_{i} 
< 2|A_k|.
\]

\item[(ii)]
If $k \in I$ and and $0 \notin U_k$, then $U_k$ is a proper subset of $[1, d_k-1]$ 
and so $|U_k| \leq d_k-2$.  
Inequality~\eqref{MissingDigits:inequality-2} gives 
\[
d_k - |U_k| \leq 2\left( d_{k} - |U_k| -1 \right).
\]
We obtain 
\begin{align*}
|A_k| & = (d_{k} - |U_k| -1) \prod_{\substack{i=0 \\ i \in I}}^{k-1} (d_{i}-|U_i|) 
 \prod_{\substack{i=0 \\ i \notin I}}^{k} d_{i} \\
&  < \prod_{\substack{i=0 \\ i \in I}}^{k} (d_{i}-|U_i|)  \prod_{\substack{i=0 \\ i \notin I}}^{k} d_{i} \\
& \leq 2 (d_{k} - |U_k| -1) \prod_{\substack{i=0 \\ i \in I}}^{k-1} (d_{i}-|U_i|)  
\prod_{\substack{i=0 \\ i \notin I}}^{k} d_{i}  \\ 
& = 2|A_k|.  
\end{align*}

\item[(iii)]
We have $d_k \geq 2$ and so $d_k \leq 2(d_k-1)$ from inequality~\eqref{MissingDigits:inequality-2}.  
If $k \notin I$, then 
\begin{align*}
|A_k| 
& = (d_{k} - 1) \prod_{\substack{i=0 \\ i \in I}}^{k} (d_{i}-|U_i|)  \prod_{\substack{i=0 \\ i \notin I}}^{k-1} d_{i} 
 < \prod_{\substack{i=0 \\ i \in I}}^{k} (d_{i}-|U_i|)  \prod_{\substack{i=0 \\ i \notin I}}^{k} d_{i} \\
& =  d_{k}  \prod_{\substack{i=0 \\ i \in I}}^{k} (d_{i}-|U_i|)  \prod_{\substack{i=0 \\ i \notin I}}^{k-1} d_{i} 
\leq 2 (d_{k} - 1) \prod_{\substack{i=0 \\ i \in I}}^{k} (d_{i}-|U_i|)  \prod_{\substack{i=0 \\ i \notin I}}^{k-1} d_{i} \\
& = 2|A_k|.
\end{align*}
\end{enumerate} 
This completes the proof.  
\end{proof}

Let $A$ be a set of nonnegative integers, and let 
$A(n)$ be the number of elements $a \in A$ with $a \leq n$.  
The   \textit{upper asymptotic density} of the set $A$ is $d_U(A) = \limsup_{n \rightarrow \infty} A(n)/n$.
If  $\lim_{n \rightarrow \infty} A(n)/n $ exists, then  $d(A) = \lim_{n \rightarrow \infty} A(n)/n$ 
is called the \textit{asymptotic density} of the set $A$.  
The set $A$ has \emph{asymptotic density zero} if $d(A) = d_U(A) = 0$.

\begin{lemma}                                             \label{MissingDigits:lemma:series}
Let $A$ be a set of positive integers.  If $\sum_{a\in A} 1/a < \infty$, then $d(A) = 0$.
\end{lemma}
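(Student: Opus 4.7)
The plan is to prove the contrapositive statement directly: assume the series converges and deduce that $A(n)/n \to 0$. The key tool is the elementary observation that if $m < n$ then each $a \in A$ with $m < a \leq n$ contributes at least $1/n$ to the harmonic sum, which gives
\[
\sum_{\substack{a \in A \\ m < a \leq n}} \frac{1}{a} \ \geq\ \frac{A(n) - A(m)}{n}.
\]

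First I would fix an arbitrary $\varepsilon > 0$. Since $\sum_{a \in A} 1/a$ converges, its tails tend to zero, so there exists an integer $m = m(\varepsilon)$ such that $\sum_{a \in A,\, a > m} 1/a < \varepsilon$. Combining this with the displayed inequality above yields $(A(n) - A(m))/n < \varepsilon$ for every $n > m$, and hence
\[
\frac{A(n)}{n} \ <\ \varepsilon + \frac{A(m)}{n}
\]
for all such $n$.

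Next, since $m$ is fixed while $n \to \infty$, the term $A(m)/n$ tends to $0$. Taking $\limsup$ as $n \to \infty$ gives $d_U(A) = \limsup_{n \to \infty} A(n)/n \leq \varepsilon$. Since $\varepsilon > 0$ was arbitrary, $d_U(A) = 0$, and because $0 \leq A(n)/n$ for all $n$, the limit exists and equals $0$, so $d(A) = 0$.

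There is really no obstacle here; the only subtlety is choosing the right inequality, namely comparing the partial sum over $m < a \leq n$ (rather than over $a \leq n$) with $A(n)/n$, so that the tail-smallness of a convergent series can be invoked. A naive bound $\sum_{a \leq n, a \in A} 1/a \geq A(n)/n$ only shows $A(n)/n$ is bounded, not that it tends to zero, so the splitting at $m$ is essential.
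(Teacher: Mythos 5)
Your proof is correct, but it runs in the opposite direction from the paper's. (Minor quibble: what you prove is the statement itself, directly — assume convergence, deduce zero density — not its contrapositive, despite your opening sentence.) The paper instead proves the contrapositive: it assumes $d_U(A)=\alpha>0$, extracts a sequence $n_0<n_1<\cdots$ with $n_i>2n_{i-1}$ along which $A(n_i)/n_i>\alpha-\varepsilon$ while $A(n)/n<\alpha+\varepsilon$ for all large $n$, and shows each block $(n_{i-1},n_i]$ contributes at least $(\alpha-3\varepsilon)/2>0$ to the harmonic sum, forcing divergence. Both arguments hinge on the same elementary inequality $\sum_{m<a\le n,\,a\in A}1/a\ge (A(n)-A(m))/n$; the difference is which hypothesis is fed into it. Your direct version is shorter and cleaner: convergence gives small tails, so $A(n)/n\le\varepsilon+A(m)/n$ and the full limit $d(A)=0$ drops out at once, with no need to choose $\varepsilon<\alpha/3$ or construct a lacunary subsequence. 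The paper's version is slightly more laborious but isolates the quantitatively useful fact that positive \emph{upper} density alone already forces divergence, with an explicit lower bound of $k(\alpha-3\varepsilon)/2$ on the partial sums after $k$ blocks. Your argument is complete as written; the only point to be careful about — which you correctly flag — is that one must split the sum at a fixed $m$ rather than compare the whole partial sum to $A(n)/n$.
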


\begin{proof}
We show that $d_U(A) > 0$ implies $\sum_{a\in A} 1/a = \infty$.

If $d_U(A) = \limsup_{n \rightarrow \infty} A(n)/n = \alpha > 0$, 
then, for every $\varepsilon > 0$, we have 
\[
\frac{A(n)}{n} < \alpha + \varepsilon \qquad \text{for all integers $n \geq N(\varepsilon)$} 
\] 
and 
\beq                                          \label{MissingDigits:beta} 
\frac{A(n_i)}{n_i} > \alpha - \varepsilon \qquad  \text{for infinitely many integers $n_i $.} 
\eeq
Let $\varepsilon < \alpha/3$.  There is a sequence of positive integers 
$(n_i)_{i=0}^{\infty}$ satisfying inequality~\eqref{MissingDigits:beta} 
such that $n_0 \geq N(\varepsilon)$ and 
$n_{i} > 2n_{i-1}$ for all $i \geq 1$.  We have 
\begin{align*}
A(n_i) - A(n_{i-1}) & > (\alpha -\varepsilon) n_i - (\alpha +\varepsilon) n_{i-1} \\
& > (\alpha -\varepsilon) n_i - \frac{(\alpha +\varepsilon) n_{i}}{2} \\
& = n_i\left( \frac{\alpha - 3\varepsilon}{2}\right) 
\end{align*}
and so 
\begin{align*}
\sum_{\substack{a\in A \\ n_{i-1} < a \leq n_{i} }} \frac{1}{a} 
& \geq \frac{A(n_i) - A(n_{i-1})}{n_i} >  \frac{\alpha  - 3\varepsilon}{2} > 0.
\end{align*}
It follows that 
\[
\sum_{\substack{a\in A \\ 1 \leq a \leq n_k}} \frac{1}{a} 
\geq \sum_{i=1}^k \sum_{\substack{a\in A \\ n_{i-1} < a \leq n_{i} }} \frac{1}{a} 
> k\left(  \frac{\alpha  - 3\varepsilon}{2} \right) 
\]
and the infinite series $\sum_{a\in A} 1/a$ diverges. 
Equivalently, convergence of the infinite series $\sum_{a\in A} 1/a$ implies  $d(A) = 0$.  
This completes the proof.  
\end{proof}

The converse of Lemma~\ref{MissingDigits:lemma:series} is false.
The set of prime numbers has asymptotic density zero, but the sum of the reciprocals of the primes diverges.

\begin{theorem}                      \label{MissingDigits:theorem:converges} 
Let $\mcg = (g_i)_{i=0}^{\infty}$ be a \mcg-adic sequence with bounded quotients, that is,
\beq                \label{Digits:limsup}
d_i = \frac{g_{i+1}}{g_i}  \leq d 
\eeq
for some integer $d \geq 2$  and all $i =0,1,2,\ldots$.   
Let $I$ be a set of nonnegative integers, and, for all $i \in I$, 
let $U_i$ be a nonempty proper subset of  $[0,d_i-1]$.

Let $n = \sum_{i=0}^k c_i g_i$ be the $\mcg$-adic representation of the positive integer $n$.  
Let $A$ be the set of positive integers $n$  that satisfy the missing digits 
condition~\eqref{Digits:MissingDigitsCondition}. 
If 
\beq                \label{Digits:I(x)-1}
I(k) \geq \frac{(1+\delta)\log k}{\log (d/(d-1))}
\eeq 
for some $\delta > 0$ and all $k \geq k_0 = k_0(\delta) $, 
then the set $A$ has asymptotic density zero and the harmonic series $\sum_{a\in A} 1/a$ converges.

If 
\beq                \label{Digits:I(x)-2}
I(k) \leq \frac{(1-\delta) \log k}{\log d}
\eeq 
for some $\delta> 0$ and all $k \geq k_1 = k_1(\delta) $, 
then the harmonic series $\sum_{a\in A} 1/a$ diverges.
\end{theorem}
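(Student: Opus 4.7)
The strategy is to estimate, for each $k \geq 0$, the partial sum $S_k = \sum_{a \in A_k} 1/a$, where $A_k$ is the $k$-th slice of $A$ defined before Lemma~\ref{MissingDigits:lemma:Ak}, and then sum these over $k$. Since every $a \in A_k$ satisfies $g_k \leq a \leq g_{k+1}-1$, one has $|A_k|/g_{k+1} \leq S_k \leq |A_k|/g_k$. In both halves of the theorem I would start from~\eqref{MissingDigits:|Ak|} and factor $g_{k+1} = \prod_{i=0}^k d_i$ out of the product, rewriting it as $g_{k+1} \prod_{i \in I \cap [0,k]} (d_i - |U_i|)/d_i$.

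For the convergence part I would use $|U_i| \geq 1$ together with the monotonicity $(d_i-1)/d_i \leq (d-1)/d$, which holds because $d_i \leq d$ and $x \mapsto (x-1)/x$ is increasing. This gives
\[
|A_k| \leq g_{k+1} \left(\frac{d-1}{d}\right)^{I(k)}, \qquad \text{hence} \qquad S_k \leq d\left(\frac{d-1}{d}\right)^{I(k)}.
\]
Substituting hypothesis~\eqref{Digits:I(x)-1} yields $S_k \leq d\, k^{-(1+\delta)}$ for $k \geq k_0$, a summable sequence. Convergence of $\sum_{a \in A} 1/a$ follows, and the density-zero conclusion is then immediate from Lemma~\ref{MissingDigits:lemma:series}.

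For the divergence part, the fact that $U_i$ is a proper subset of $[0,d_i-1]$ gives $d_i - |U_i| \geq 1$, and combined with $d_i \leq d$ this yields $|A_k| \geq g_{k+1}/(2 d^{I(k)})$ whenever $A_k \neq \emptyset$. Hypothesis~\eqref{Digits:I(x)-2} then forces
\[
S_k \geq \frac{|A_k|}{g_{k+1}} \geq \frac{1}{2 d^{I(k)}} \geq \frac{1}{2 k^{1-\delta}}
\]
for all $k \geq k_1$ with $A_k \neq \emptyset$.

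The one remaining subtlety is that this lower bound is only available for $k$ with $A_k \neq \emptyset$. By Lemma~\ref{MissingDigits:lemma:Ak}(a) the excluded indices lie in $I$, and~\eqref{Digits:I(x)-2} forces $I(K) = O(\log K)$. Since $\sum_{k \leq K} k^{-(1-\delta)}$ grows like $K^\delta$, while the contribution of any $O(\log K)$ excluded indices, even placed at the smallest possible values, is at most $O((\log K)^\delta) = o(K^\delta)$, the restricted sum still tends to infinity. I expect this small bookkeeping about empty slices to be the only step requiring any care; the remainder is a direct unpacking of Lemma~\ref{MissingDigits:lemma:Ak} together with the hypothesis on $I(k)$.
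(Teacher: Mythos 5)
Your proposal is correct and follows essentially the same route as the paper: decompose $A$ into the slices $A_k = A \cap [g_k, g_{k+1}-1]$, bound $\sum_{a\in A_k}1/a$ between $|A_k|/g_{k+1}$ and $|A_k|/g_k$, and apply Lemma~\ref{MissingDigits:lemma:Ak} together with $1/d \le (d_i-|U_i|)/d_i \le 1-1/d$ to reduce both halves to the series $\sum_k k^{-(1\pm\delta)}$. Your extra bookkeeping for the indices with $A_k=\emptyset$ in the divergence half addresses a point the paper passes over silently, and it is handled correctly.
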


Kempner's theorem is the special case  $g_i = 10^i$, $d_i = 10$,  
and $U_i = \{9\}$ for all $i \in I = \mathbf{N}_0$.

\begin{proof}  
For all $k \in \mathbf{N}_0$,  the finite sets 
\[
A_{k} = A \cap [g_k,g_{k+1} - 1] 
\]
are pairwise disjoint and 
$ 
A = \bigcup_{k=0}^{\infty} A_k. 
$

For all $i \in I$, we have 
\[
1 \leq |U_i| \leq d_i -1
\]
and 
\[
 \frac{1}{d}  \leq \frac{1}{d_i}  \leq \frac{d_i-|U_i|}{d_i} = 1 -  \frac{|U_i|}{d_i} \leq 1 - \frac{1}{d} < 1. 
\]

Let $I(k)$ satisfy inequality~\eqref{Digits:I(x)-1}.  We obtain 
\beq                                                \label{MissingDigits:UpperBound}
\left(1 - \frac{1}{d} \right)^{I(k)} \leq 
\left( \frac{d-1}{d} \right)^{ \frac{(1+\delta)\log k}{\log (d/(d-1))}} = \frac{1}{k^{1+\delta}}.
\eeq
If $a \in A_k$, then $a \geq g_k = d_0d_1\cdots d_{k-1}$.  By Lemma~\ref{MissingDigits:lemma:Ak},  
\begin{align*}
\sum_{ \substack{a\in A \\ a \geq g_{k_0}}} \frac{1}{a} 
& =  \sum_{k= k_0 }^{\infty} \sum_{a\in A_k} \frac{1}{a}  
\leq \sum_{k= k_0}^{\infty} \frac{|A_k|}{g_{k}} \\
& \leq  \sum_{k= k_0}^{\infty} \frac{d_k}{\prod_{i=0}^{k} d_{i} } 
\prod_{\substack{i=0 \\ i \in I}}^{k} (d_{i}-|U_i|)   \prod_{\substack{i=0 \\ i \notin I}}^{k} d_{i} \\
& \leq d  \sum_{k= k_0}^{\infty} \prod_{\substack{i=0 \\ i \in I}}^{k} \frac{d_i -|U_i|}{d_i} \\
& \leq d  \sum_{k= k_0}^{\infty} \left(1 - \frac{1}{d} \right)^{I(k)} \\
& \leq d  \sum_{k= k_0}^{\infty} \frac{1}{k^{1+\delta}} < \infty.
\end{align*}
Thus, the harmonic series converges.  By Lemma~\ref{MissingDigits:lemma:series}, 
the set $A$ has  asymptotic density zero.

Let $I(k)$ satisfy inequality~\eqref{Digits:I(x)-2}.  We obtain 
\beq                                                \label{MissingDigits:LowerBound}
\left(\frac{1}{d} \right)^{I(k)} \geq 
\left( \frac{1}{d} \right)^{ \frac{(1-\delta)\log k}{\log d}} = \frac{1}{k^{1-\delta}}. 
\eeq
If $a \in A_k$, then $a < g_{k+1} = d_0d_1\cdots d_{k-1}d_k$.  
By Lemma~\ref{MissingDigits:lemma:Ak},  
\begin{align*}
\sum_{\substack{a\in A \\ a \geq g_{k_1}} }\frac{1}{a} 
& =  \sum_{k=k_1}^{\infty} \sum_{a\in A_k} \frac{1}{a}  
\geq \sum_{k=k_1}^{\infty} \frac{|A_k|}{g_{k+1}} \\
& \geq \frac{1}{2}  \sum_{k=k_1}^{\infty} \frac{1}{\prod_{i=0}^{k} d_{i} } 
\prod_{\substack{i=0 \\ i \in I}}^{k} (d_{i}-|U_i|)   \prod_{\substack{i=0 \\ i \notin I}}^{k} d_{i} \\
&  = \frac{1}{2}  \sum_{k=k_1}^{\infty} \prod_{\substack{i=0 \\ i \in I}}^{k} \frac{d_i -|U_i|}{d_i}  
\geq  \frac{1}{2}  \sum_{k=k_1}^{\infty} \prod_{\substack{i=0 \\ i \in I}}^{k} \frac{1}{d_i} \\
& \geq  \frac{1}{2}   \sum_{k=k_1}^{\infty} \left(\frac{1}{d} \right)^{I(k)} \\
& \geq  \frac{1}{2}  \sum_{k=k_1}^{\infty} \frac{1}{k^{1-\delta}} 
\end{align*}
and the harmonic series diverges.  
This completes the proof.  
\end{proof}

\begin{corollary} 
Let $I$ be a set of nonnegative integers, and let $(v_i)_{i \in I}$ be a sequence of 0s and 1s.  
Let $A$ be the set of integers $n$ such that, 
if $n \in \left[2^k, 2^{k+1}-1 \right]$ has the $2$-adic representation $n = \sum_{i=0}^k c_i 2^i$, 
then $c_i = v_i$ for all $i \in I \cap [0,k]$.  
If 
\[
I(k) \geq (1+\delta)\log_2 k
\] 
for some $\delta > 0$ and all $k \geq k_0(\delta) $, 
then the harmonic series $\sum_{a\in A} 1/a$ converges.
If
\[
I(k) \leq (1-\delta) \log_2 k
\]
for some $\delta> 0$ and all $k \geq k_1(\delta) $, 
then the harmonic series $\sum_{a\in A} 1/a$ diverges.
\end{corollary}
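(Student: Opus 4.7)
The plan is to recognize this corollary as a direct specialization of Theorem~\ref{MissingDigits:theorem:converges} to the binary case. First I would set $g_i = 2^i$ for all $i \geq 0$, so that $\mcg = (2^i)_{i=0}^{\infty}$ is a $\mcg$-adic sequence with quotients $d_i = 2$ for all $i$. The uniform bound $d = 2$ in hypothesis~\eqref{Digits:limsup} is then automatic.

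Next I would translate the constraint ``$c_i = v_i$ for $i \in I$'' into the missing-digits language of the theorem. For each $i \in I$, define $U_i = \{1 - v_i\} \subseteq [0, d_i - 1] = [0,1]$. Since $v_i \in \{0,1\}$, the set $U_i$ consists of exactly one element, so it is a nonempty proper subset of $[0,1]$. The missing digits condition~\eqref{Digits:MissingDigitsCondition} then reads $c_i \in [0,1] \setminus \{1 - v_i\} = \{v_i\}$, which is precisely the prescription in the corollary. Hence the set $A$ in the corollary coincides with the set $A$ produced by Theorem~\ref{MissingDigits:theorem:converges} for this choice of $\mcg$, $I$, and $(U_i)_{i \in I}$.

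Finally I would substitute $d = 2$ into the quantitative hypotheses of the theorem. Since
\[
\frac{1}{\log(d/(d-1))} = \frac{1}{\log 2} \qand \frac{1}{\log d} = \frac{1}{\log 2},
\]
the convergence hypothesis~\eqref{Digits:I(x)-1} becomes $I(k) \geq (1+\delta)\log_2 k$ and the divergence hypothesis~\eqref{Digits:I(x)-2} becomes $I(k) \leq (1-\delta)\log_2 k$. Thus the two statements of the corollary follow immediately from the corresponding conclusions of Theorem~\ref{MissingDigits:theorem:converges}.

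There is no real obstacle here; the only step requiring care is the bookkeeping that the ``prescribed digit'' formulation of the corollary matches the ``forbidden digit set'' formulation of the theorem, which in the binary case is just the trivial observation that fixing one of two binary values is the same as forbidding the other.
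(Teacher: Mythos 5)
Your proposal is correct and is exactly the paper's proof: the author likewise sets $U_i = \{1-v_i\}$ and applies Theorem~\ref{MissingDigits:theorem:converges} with $d_i = d = 2$, for which $\log(d/(d-1)) = \log d = \log 2$ turns both hypotheses into the stated $\log_2 k$ bounds. The only difference is that you spell out the bookkeeping (checking $U_i$ is a nonempty proper subset of $[0,1]$ and that prescribing a binary digit equals forbidding its complement) that the paper leaves implicit.
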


\begin{proof}
For all $i \in I$, let $u_i = 1-v_i$ and $U_i = \{u_i\}$.  
Apply Theorem~\ref{MissingDigits:theorem:converges}.  
\end{proof}

It is an open problem to determine the convergence or divergence of $\sum_{a\in A} 1/a$ 
if $I(k) \sim \log_2 k$.

\section{\mcg-adic representations with unbounded quotients}
Let $\mcg = (g_i)_{i=0}^{\infty}$ be a \mcg-adic sequence 
with quotients $d_i = g_{i+1}/g_i$.   
Let $I$ be an infinite  set of nonnegative integers, and,   
for all $i \in I$, let $U_i$ be a nonempty proper subset of  $[0,d_i-1]$.    
If the sequence  $\mcg = (g_i)_{i=0}^{\infty}$ has bounded quotients $d_i \leq d$,  
then 
\[
\frac{|U_i|}{d_i} \geq \frac{1}{d}
\]
for all $i \in I$ and the infinite series $\sum_{i\in I} \frac{|U_i|}{d_i}$ diverges.
Equivalently, the convergence of this series implies that \mcg\
has unbounded quotients.

Let $n = \sum_{i=0}^k c_i g_i$ be the $\mcg$-adic representation of the positive integer $n$.  
Let  $A$ be the set of positive integers whose  $\mcg$-adic representations 
satisfy the missing digits condition~\eqref{Digits:MissingDigitsCondition}. 
The missing digits set $A$ is finite if and only if $I$ is a cofinite set of nonnegative 
integers and $U_i = [1,d_i-1]$ for all sufficiently large $i$.  
The harmonic series of a finite set of positive integers converges.

Theorem~\ref{MissingDigits:theorem:converges} shows that harmonic series constructed 
from infinite sets of integers with missing $\mcg$-adic digits do not always converge.  
It follows from Theorem~\ref{MissingDigits:theorem:converges} that if 
\[
I(k) \geq (\log k)^{1+\delta} 
\] 
for some $\delta > 0$ and all $k \geq k_0(\delta) $, 
and if $\sum_{a \in A} 1/a$ diverges, 
then the sequence \mcg\ must have \emph{unbounded quotients}, that is, 
\[
\limsup d_i = \infty.
\]

Theorem~\ref{MissingDigits:theorem:diverges}  gives a sufficient condition 
for the divergence of harmonic series of sets of positive integers 
constructed from \mcg-adic sequences with unbounded  quotients.  
We use the following inequality, 
which is easily proved by induction:                  
If $0 \leq x_i < 1$ for $i=1,\ldots, n$, then 
\beq               \label{MissingDigits:InfiniteProduct}
\prod_{i=1}^n (1-x_i) \geq 1-\sum_{i=1}^n x_i.
\eeq

\begin{theorem}              \label{MissingDigits:theorem:diverges} 
Let $\mcg = (g_i)_{i=0}^{\infty}$ be a \mcg-adic sequence, 
and let $n = \sum_{i=0}^k c_i g_i$ be the $\mcg$-adic representation of the positive integer $n$.  
Let $I$ be an infinite  set of nonnegative integers, and,   
for all $i \in I$, let $U_i$ be a nonempty proper subset of  $[0,d_i-1]$.   
Let  $A$ be the set of positive integers whose  $\mcg$-adic representations 
satisfy the missing digits condition~\eqref{Digits:MissingDigitsCondition}.  
If the set $A$ is infinite and if 
\beq                                      \label{MissingDigits:ConvergentSeries} 
\sum_{i\in I} \frac{|U_i|}{d_i} < \infty 
\eeq
then  the sequence $\mcg = (g_i)_{i=0}^{\infty}$ 
has unbounded  quotients and the harmonic series $\sum_{a\in A} 1/a$ diverges.
\end{theorem}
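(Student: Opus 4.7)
The plan is to mirror the divergence half of Theorem~\ref{MissingDigits:theorem:converges}, but instead of comparing the tail series to a $p$-series, to bound $\sum_{a\in A_k} 1/a$ below by a positive constant that does not depend on $k$. The first job is the easy assertion that $\mcg$ has unbounded quotients: if $d_i\leq d$ for all $i$, then for every $i\in I$ we have $|U_i|/d_i\geq 1/d_i\geq 1/d$, and since $I$ is infinite this forces $\sum_{i\in I}|U_i|/d_i=\infty$, contradicting~\eqref{MissingDigits:ConvergentSeries}.

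For the divergence of the harmonic series, I would start as in Theorem~\ref{MissingDigits:theorem:converges}. Using $a<g_{k+1}=d_0d_1\cdots d_k$ for $a\in A_k$ and the lower bound
\[
|A_k|\geq \tfrac12\prod_{\substack{i=0 \\ i\in I}}^{k}(d_i-|U_i|)\prod_{\substack{i=0 \\ i\notin I}}^{k}d_i
\]
from Lemma~\ref{MissingDigits:lemma:Ak} (valid whenever $A_k\neq\emptyset$), I obtain
\[
\sum_{a\in A_k}\frac{1}{a}\geq \frac{|A_k|}{g_{k+1}}\geq \frac12\prod_{\substack{i=0 \\ i\in I}}^{k}\left(1-\frac{|U_i|}{d_i}\right).
\]
The heart of the argument is to bound the product on the right below by a positive constant independent of $k$.

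This is where inequality~\eqref{MissingDigits:InfiniteProduct} enters, and the main obstacle is that a direct application of~\eqref{MissingDigits:InfiniteProduct} to the entire product could give a useless nonpositive bound if $\sum_{i\in I}|U_i|/d_i\geq 1$. I would get around this by splitting the product at a suitable index. Since the series in~\eqref{MissingDigits:ConvergentSeries} converges, I can choose $k_0$ so large that $\sum_{i\in I,\,i>k_0}|U_i|/d_i<1/2$. Then for all $k>k_0$,~\eqref{MissingDigits:InfiniteProduct} yields
\[
\prod_{\substack{i=k_0+1 \\ i\in I}}^{k}\left(1-\frac{|U_i|}{d_i}\right)\geq 1-\sum_{\substack{i=k_0+1 \\ i\in I}}^{k}\frac{|U_i|}{d_i}>\frac{1}{2},
\]
while the finite head $C=\prod_{i\in I,\,i\leq k_0}(1-|U_i|/d_i)$ is a fixed positive constant because each factor $(d_i-|U_i|)/d_i\geq 1/d_i>0$ (here I use that $U_i$ is a \emph{proper} subset of $[0,d_i-1]$, so $|U_i|\leq d_i-1$).

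Combining these estimates, $\sum_{a\in A_k}1/a\geq C/4$ for every $k>k_0$ with $A_k\neq\emptyset$. Finally, since $A=\bigcup_k A_k$ is assumed infinite and each $A_k$ is finite, the set $A_k$ must be nonempty for infinitely many indices $k>k_0$, giving infinitely many summands of size at least $C/4$. Therefore $\sum_{a\in A}1/a=\infty$, as required.
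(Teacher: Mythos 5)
Your proposal is correct and follows essentially the same route as the paper: the same reduction via Lemma~\ref{MissingDigits:lemma:Ak} to the product $\prod_{i\in I,\, i\leq k}(1-|U_i|/d_i)$, the same splitting of that product into a fixed positive head and a tail controlled by inequality~\eqref{MissingDigits:InfiniteProduct} once the tail of the convergent series drops below $1/2$, and the same observation that infinitely many nonempty blocks $A_k$ each contribute at least a fixed positive amount. The only differences are cosmetic (where you cut the product and the constant $C/4$ versus the paper's $\delta/2$).
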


For example, the ``missing digits'' set constructed from 
$\mcg = (g_i)_{i=0}^{\infty}$ with  $g_i = 2^{i(i+1)/2}$ and $d_i = 2^{i+1}$ 
and with $I = \mathbf{N}_0$ 
and $U_i = \{0\}$ for all $i \in I$ has a divergent harmonic series.

\begin{proof}
Because the infinite series~\eqref{MissingDigits:ConvergentSeries} converges, 
there is an integer $i_0 \in I$ such that 
\[
\sum_{\substack{i=i_0\\i \in I}}^{\infty} \ \frac{|U_i|}{d_i} < \frac{1}{2}.
\]
Inequality~\eqref{MissingDigits:InfiniteProduct} implies that, for all $k \in \mathbf{N}_0$,  
\[
\prod_{\substack{i=i_0\\i \in I}}^{k} \ \left( 1 - \frac{|U_i|}{d_i} \right) 
\geq 1 - \sum_{\substack{i=i_0\\i \in I}}^{k}  \frac{|U_i|}{d_i} > \frac{1}{2} 
\]
and so 
\begin{align*}
\prod_{\substack{i=0\\i \in I}}^{k} \left( 1 - \frac{|U_i|}{d_i} \right) 
& = \prod_{\substack{i=0\\i \in I}}^{i_0 -1} \left( 1 - \frac{|U_i|}{d_i} \right) 
 \prod_{\substack{i=i_0 \\i \in I}}^{k} \left( 1 - \frac{|U_i|}{d_i} \right) \\
& > \frac{1}{2} \prod_{\substack{i=0\\i \in I}}^{i_0 -1} \left( 1 - \frac{|U_i|}{d_i} \right) = \delta > 0. 
\end{align*}

Let $A_k = A \cap [g_k, g_{k+1}-1]$. The set $A$ is infinite if and only if 
$A_k \neq \emptyset$ for infinitely many $k$.  
Applying inequality~\eqref{MissingDigits:|Ak|}   of Lemma~\ref{MissingDigits:lemma:Ak}, 
we obtain 
\begin{align*}
\sum_{a\in A} \frac{1}{a} 
& =  \sum_{ \substack{k=0 \\ A_k \neq \emptyset} }^{\infty} \sum_{a\in A_k} \frac{1}{a}  
\geq \sum_{  \substack{k=0 \\ A_k \neq \emptyset}  }^{\infty} \frac{|A_k|}{g_{k+1}} \\
& \geq  \frac{1}{2} \sum_{ \substack{k=0 \\ A_k \neq \emptyset} }^{\infty} \frac{1}{\prod_{i=0}^{k} d_{i} } \prod_{\substack{i=0\\i\in I}}^{k} (d_i - |U_i|) 
\prod_{\substack{i=0\\ i \notin I}}^{k} d_i \\
& =  \frac{1}{2} \sum_{ \substack{k=0 \\ A_k \neq \emptyset} }^{\infty} 
\prod_{ \substack{ i=0 \\ i \in I }}^{k}  \left( 1 - \frac{|U_i|}{d_i} \right) 
\end{align*}
and so the harmonic series $\sum_{a\in A} \frac{1}{a}$ diverges.  
This completes the proof.  
\end{proof}

\def\cprime{$'$} \def\cprime{$'$} \def\cprime{$'$}
\providecommand{\bysame}{\leavevmode\hbox to3em{\hrulefill}\thinspace}
\providecommand{\MR}{\relax\ifhmode\unskip\space\fi MR }
\providecommand{\MRhref}[2]{%
  \href{http://www.ams.org/mathscinet-getitem?mr=#1}{#2}
}
\providecommand{\href}[2]{#2}

\end{document}